\DeclareMathOperator{\itlog}{itlog}
\newcommand{\qbinom}[2]{\binom{#1}{#2}_{\mkern-7mu q}}
\title{Explicit Expressions for Iterates of Power Series}
\author{Kei Beauduin}
\date{}
\begin{document}

\maketitle

\begin{abstract}
    In this paper, we present several formulas for both the discrete and fractional iterates of an invertible power series $f$, using a new unifying approach based on umbral calculus. Known formulas are extended, and their proofs simplified, while new expressions are introduced. In particular, by employing $q$-calculus identities, we eliminate the requirement for $f'(0)$ to equal $1$ and the resulting general expressions for the iterative logarithm are obtained as well.
\end{abstract}

\section{Introduction}

The set of formal power series is closed under addition, multiplication and, as a corollary, under composition as well. This makes it meaningful to study repeated self-composition -- \emph{iteration} -- of a compositionally invertible power series $f$, denoted $f^s$ for $s\in\N := \{0, 1, 2, \dots\}$. Investigations on this problem trace back to Cayley (1860) \cite{cayley1860}. This topic became even more intriguing once authors noticed the natural extension of iteration to complex values $s\in\C$, a generalization sometimes known as \emph{fractional iteration}. Despite its long history, explicit formulas for fractional iterates remain relatively rare in the literature, owing to the substantial difficulty of tackling the issue directly. To overcome the challenges, Schröder (1870) \cite{schroeder1871} and Jabotinsky (1963) \cite{jabotinsky1963} used different techniques yielding distinct formulas for the fractional iterates of $f$ when $f'(0) = 1$. Schröder relied on an intriguing identity of Cayley \cite{cayley1860}, whereas Jabotinsky used a matrix representation of power series due to Bennett \cite{bennett1915}, sometimes referred to as \emph{Jabotinsky matrices}. Unbeknownst to Jabotinsky, Tambs Lyche \cite{tambslyche1927,lavoie1981} had arrived at the same results in 1927, with a more involved proof, but without the condition on $f'(0)$.

In \Cref{s:preli}, we introduce a new technique from the theory of \emph{umbral calculus} that unifies the two approaches. Specifically, in \Cref{s:frac1} we employ a generalization of Jabotinsky matrices, that enables the derivation of both Jabotinsky's and Schröder's formulas. Then, utilizing \emph{$q$-calculus}, in \Cref{s:schrodergen,s:jabogen} we extend these findings to situations where $q := f'(0)$ is not necessarily equal to $1$. This not only shortens Tambs Lyche's original proof but also establishes the general form of Schröder's formula. In addition, in \Cref{s:monkam} we demystify the recent formula for discrete iterates proposed by Monkam (2021) \cite{monkam2020} that we generalize to fractional iterates in \Cref{s:monkamgen}. Finally, the corresponding new expressions for the \emph{iterative logarithm} are computed in \Cref{s:itlog}.

\section{Preliminaries}\label{s:preli}

In this section, we provide a concise overview of the key tools relevant to our study.

\subsection{Coefficients of linear operators over polynomials}\label{s:coeff}

It is often convenient to write the composition and application of an operator as an implicit product, so that $(U \circ V)(p)(x)$ is written more clearly as $UV p(x)$. For any operator $U$, we define its \emph{coefficient}\footnote{The symbol for the coefficients should not be mistaken for the notation that is frequently used to represent $q$-\emph{binomial coefficients} \cite{kac2002}, as we adopt a different notation for that purpose later.} \cite[Sec.~4.1]{beauduin2024} by 
\begin{equation}
    U x^n := \sum_{k=0}^n \coeff{n}{k}_U x^k.
\end{equation}
For example, if $U$ is the identity, denoted by $1$, we write
\begin{equation}
    \coeff{n}{k}_1 = \delta_{n-k},
\end{equation}
where $\delta_n$ is the \emph{Kronecker delta} defined to be $1$ whenever $n$ is $0$, and $0$ if not. The coefficient is itself a linear operator, that is, for two operators $U,V$ and $\lambda, \mu\in\C$
\begin{equation}
    \coeff{n}{k}_{\lambda U + \mu V} = \lambda \coeff{n}{k}_U + \mu \coeff{n}{k}_V,
\end{equation}
whereas its interaction with composition can be worked out to be
\begin{equation}
    \coeff{n}{k}_{UV} = \sum_{r=k}^n \coeff{r}{k}_U \coeff{n}{r}_V.
\end{equation}
Both formulas readily generalize to finite sums/compositions : for $p\in\N$, some scalars $(\lambda_i)_{0\leq i<p}$ and some operators $(U_i)_{0\leq i< p}$
\begin{equation}\label{e:linear}
    \coeff{n}{k}_{\sum_{i=0}^{p-1} \lambda_i U_i} = \sum_{i=0}^{p-1} \lambda_i \coeff{n}{k}_{U_i},
\end{equation}
and
\begin{equation}\label{e:compos}
    \coeff{n}{k}_{\prod_{i=0}^{p-1} U_i} = \sum_{k=r_0\leq \ldots\leq r_p=n} \prod_{i=0}^{p-1} \coeff{r_{i+1}}{r_i}_{U_i}.
\end{equation}
The latter summation ranges over all increasing discrete functions $r : \bbra{0, p} \to \bbra{k, n}$ (where $\bbra{a, b} := \{a, a+1, \ldots, b-1, b\}$) such that $r_0 = k$ and $r_p = n$.

\subsection{Bell polynomials}

In this section and thereafter, we consider $f$ to be an invertible (formal) power series. That is, it can be expressed as
\begin{equation}\label{e:f}
    f(t) := \sum_{n=1}^\infty \frac{t^n}{n!} a_n, \quad a_1 \neq 0,
\end{equation}
where $(a_n)_{n\ge1}$ is a sequence of numbers known as the \emph{exponential coefficients} of $f$. The \emph{ordinary coefficients} are given by $q_n := a_n/n!$ and, thus, $f$ can be defined equivalently by
\begin{equation}
    f(t) = \sum_{n=1}^\infty t^n q_n, \quad q_1 \neq 0.
\end{equation}
Next, for later use, we define
\begin{equation}\label{e:q}
    q := f'(0) = a_1 = q_1.
\end{equation}
The \emph{exponential partial Bell polynomials} \cite[Chap.~3.3]{comtet1974} are defined by the following equation:
\begin{equation}
    \frac{f(t)^k}{k!} = \sum_{n=k}^\infty \frac{t^n}{n!} B_{n,k}(a_1, \ldots, a_{n-k+1}),
\end{equation}
i.e., they are the exponential coefficient of $f(t)^k/k!$ and given explicitly by
\begin{equation}
    B_{n,k}(a_1, \ldots, a_{n-k+1}) = n! \sum_{\substack{1\ell_1 + \ldots + n\ell_n = n\\ \ell_1 + \ldots + \ell_n = k}} \prod_{p=1}^{n-k+1} \frac{a_p^{\ell_p}}{(p!)^{\ell_p} \ell_p!}.
\end{equation}
Similarly, the \emph{ordinary partial Bell polynomials} are the ordinary coefficients of $f(t)^k$, that is
\begin{equation}
    f(t)^k = \sum_{n=k}^\infty t^n \hat B_{n,k}(q_1, \ldots, q_{n-k+1}),
\end{equation}
and are given by
\begin{equation}
    \hat B_{n,k}(q_1, \ldots, q_{n-k+1}) = \sum_{\substack{1\ell_1 + \ldots + n\ell_n = n\\ \ell_1 + \ldots + \ell_n = k}} \binom{k}{\ell_1, \ldots, \ell_n} \prod_{p=1}^{n-k+1} q_p^{\ell_p},
\end{equation}
where we have used the \emph{multinomial coefficient} notation:
\begin{equation}
     \binom{k}{\ell_1, \ldots, \ell_n} := \frac{k!}{\ell_1! \cdots \ell_n!}.
\end{equation}
\emph{Jabotinsky matrices} $\textbf B(f)$ \cite{jabotinsky1947,erdos1960,jabotinsky1963,comtet1974} are infinite lower triangular matrices consisting of partial ordinary Bell polynomials, i.e., $\textbf B(f) = (\hat B_{n, k}(q_1, \dots, q_{n-k+1}))_{n, k\in\N}$.

The two kinds of Bell polynomials are isomorphically linked by the relationship
\begin{equation}\label{e:Bells}
     B_{n,k}(a_1, \ldots, a_{n-k+1}) = \frac{n!}{k!} \hat B_{n,k}(q_1, \ldots, q_{n-k+1}).
\end{equation}
\subsection{Fractional iteration}\label{s:fraciter}

For an invertible formal power series $f$, we define its \emph{fractional iterates} by $f^1 := f$ and for all $r, s \in \C$,
\begin{equation}\label{e:iter}
    f^r \circ f^s = f^{r+s}.
\end{equation}
This definition is justified in, e.g., \cite[Thm.~5]{ecalle1974,ecalle1975}. When $s$ is an integer, $f^s$ coincides with the usual notion of iteration: it is the $s$-fold self-composition $f \circ \dots \circ f$ when $s\geq 0$ and $f^s = (f^{-1})^{-s}$ otherwise. A classical and illustrative example of fractional iterates is given by the function $f(t) = t/(1-t)$ for which $f^s(t) = t/(1-st)$.

\subsection{Umbral calculus}

An invertible power series gives rise to a sequence of polynomials $(\phi_n(x))_{n\in\N}$ via the generating function \cite[Thm.~4.2]{beauduin2024}
\begin{equation}\label{e:basicdef}
    e^{xf(t)} = \sum_{n=0}^\infty \frac{t^n}{n!} \phi_n(x).
\end{equation}
The sequence, referred to as a \emph{basic sequence}, is characterized by the following properties \cite[Def.~2.5]{beauduin2024}:
\begin{itemize}
    \item $(\phi_n(x))_{n\in\N}$ is a \emph{polynomial sequence}, that is, $\phi_n(x)$ is of degree $n$. We shall write these with braces: $\{\phi_n(x)\}_{n\in\N}$.
    \item $\phi_0(x) = 1$.
    \item $\phi_n(0) = 0$ for $n>0$.
    \item $\{\phi_n(x)\}_{n\in\N}$ is a \emph{Sheffer sequence}, i.e., there exists a \emph{delta operator} $Q$ such that for all $n>0$, $Q \phi_n(x) = n \phi_{n-1}(x)$.
\end{itemize}

$Q$ is, in fact, unique and equal to $f^{-1}(D)$, where $D$ is the differentiation operator. Furthermore, each polynomial sequence $\{\phi_n(x)\}_{n\in\N}$ can be uniquely associated with an invertible linear operator $\phi$, so that $\phi x^n = \phi_n(x)$. When $\{\phi_n(x)\}_{n\in\N}$ is a basic sequence, $\phi$ is called an \emph{umbral operator}. If $\psi$ is the umbral operator associated to the power series $g$, then $\phi \psi$ is the umbral operator associated to $f \circ g$. Consequently, the iterates $\phi^s$, for $s\in\Z$, are umbral operators for $f^s$ \cite[Thm.~3.3]{beauduin2024}.

It follows from \cref{e:basicdef} that the generating function of the coefficients of an umbral operator can be expressed by
\begin{equation}
    \frac{f(t)^k}{k!} = \sum_{n=k}^\infty \frac{t^n}{n!} \coeff{n}{k}_\phi.
\end{equation}
Therefore, looking back at \cref{e:f}, we understand that $\coeff{n}{1}_\phi = a_n$ and that we can express the coefficient with Bell polynomials
\begin{equation}\label{e:coeffBell}
    \coeff{n}{k}_\phi = B_{n,k}(a_1, \ldots, a_{n-k+1}).
\end{equation}
Thanks to the additional properties established in \Cref{s:coeff}, these coefficients extend the notion of Jabotinsky matrices to the case where $\phi$ is not an umbral operator. The property
\[
\coeff{n}{k}_{\phi\psi} = \sum_{r=k}^n \coeff{n}{r}_\psi \coeff{r}{k}_\phi
\]
is the coefficient analog to the fundamental composition rule for Jabotinsky matrices $\textbf B(f\circ g) = \textbf B(g) \textbf B(f)$ \cite[Thm.~A of Sec.~3.7]{comtet1974}. By contrast, there is no similar relation for expressions such as $\coeff{n}{k}_{\phi + \psi} = \coeff{n}{k}_\phi + \coeff{n}{k}_\psi$, since these matrices are not linear. This computational advantage of the coefficients, compared to Jabotinsky matrices, is the key source of the novelty in our results.

Their diagonal terms are given by
\begin{equation}\label{e:q^n}
    \coeff{n}{n}_\phi = q^n,
\end{equation}
and from the preceding discussion, for $s\in\Z$, the following result is clear:
\begin{equation}\label{e:f^s}
    \frac{f^s(t)^k}{k!} = \sum_{n=k}^\infty \frac{t^n}{n!} \coeff{n}{k}_{\phi^s}.
\end{equation}

Using \cref{e:basicdef}, we define the fractional iterates $\phi^{s}$ of $\phi$ to be the umbral operator whose basic sequence is generated by $e^{xf^s(t)}$, for $s\in\C$. Their coefficients are therefore given by \cref{e:f^s}. The goal of the subsequent sections is to give different expressions for the coefficient $\coeff{n}{k}_{\phi^s}$. Readers interested solely in fractional iterates may skip \Cref{s:discrete} and proceed directly to \Cref{s:frac}.

\section{Discrete iteration}\label{s:discrete}

This section focuses on iteration formulas specific to discrete iterations, that is, when $s\in\N$. Although \Cref{s:frac} mainly discusses formulas where $s$ is arbitrary, one should keep in mind that these are also applicable for $s\in\N$. For example, \emph{Schröder's formula} \cite{schroeder1871}, the earliest known formula for discrete iteration, is discussed in \Cref{s:frac}, as it naturally extends to fractional iteration.

\subsection{The matrix formula}

The first formula below can be interpreted as the expansion of the coefficients for the $s$-th power of the Jabotinsky matrix of $f$, as in \cite[eq.~(2)]{jabotinsky1947}.
\begin{theorem}\label{t:matrix}
    If $s\in\N$
    \begin{equation}\label{e:matrix}
        \coeff{n}{k}_{\phi^s} = \sum_{k=r_0\leq \ldots\leq r_s=n} \prod_{i=0}^{s-1} \coeff{r_{i+1}}{r_i}_\phi.
    \end{equation}
\end{theorem}

\begin{proof}
    It is immediate with \cref{e:compos}.
\end{proof}

Replacing the coefficient with its formula in terms of Bell polynomials (\ref{e:coeffBell}), converting them into an equation in terms of ordinary Bell polynomials via \cref{e:Bells}, and using a telescoping product yields
\begin{equation}\label{e:matrixord}
    \coeff{n}{k}_{\phi^s} = \frac{n!}{k!} \sum_{k=r_0\leq \ldots\leq r_s=n} \prod_{i=0}^{s-1} \hat B_{r_{i+1},r_i}(q_1, \ldots, q_{r_{i+1}-r_i+1}). 
\end{equation}

\subsection{Monkam's formula}\label{s:monkam}

Monkam recently proposed a new expression for the discrete iterates of a power series \cite{monkam2020}. However, both the presentation of the formula and the proof leave its underlying structure somewhat unclear. We show that a particular set partition allows one to rederive his result directly from \Cref{t:matrix}, and that the formula can be expressed more transparently by means of a natural object: a symmetric polynomial.

\begin{definition}
    \emph{Complete homogeneous symmetric polynomials} of degree $k$ in $n$ variables can be defined by either of the two formulas below \cite[Sec.~7.5]{Stanley}
    \begin{equation}\label{e:chsp}
        h_k(x_1, \ldots, x_n) := \sum_{1\leq i_1\leq\ldots\leq i_k \leq n} x_{i_1} \cdots x_{i_k} = \sum_{\lambda_1 + \ldots + \lambda_n = k} x_1^{\lambda_1} \cdots x_n^{\lambda_n}.
    \end{equation}
\end{definition}

\begin{theorem}[Monkam {\cite{monkam2020}}]\label{t:Monkam}
    If $s\in\N$
    \begin{equation}\label{e:Monkam}
        \coeff{n}{k}_{\phi^s} = \sum_{p=0}^{\min(s, n-k)} \sum_{k=r_0<\ldots<r_p=n} h_{s-p}(q^{r_0}, \ldots, q^{r_p}) \prod_{i=0}^{p-1} \coeff{r_{i+1}}{r_i}_\phi,
    \end{equation}
    where
    \begin{equation}\label{e:h}
        h_{s-p}(q^{r_0}, \ldots, q^{r_p}) = \sum_{0\leq i_1\leq\ldots\leq i_{s-p} \leq p} q^{r_{i_1} + \ldots + r_{i_{s-p}}} = \sum_{\lambda_0 + \ldots + \lambda_p = s-p} q^{r_0\lambda_0 + \ldots + r_p\lambda_p}.
    \end{equation}
\end{theorem}

To prove \Cref{t:Monkam}, we require the following lemma.

\begin{lemma}\label{l:partition}
    The following partitioning equality holds:
    \begin{equation}\label{e:partition}
        \{r\ |\ k = r_0 \leq \ldots \leq r_s = n \}= \bigsqcup_{p=0}^s S_p,
    \end{equation}
    where the sets $(S_p)_{0\leq p\leq s}$ consist of functions $r:\bbra{0, s} \to \bbra{k, n}$ such that there exists a function $R:\bbra{0, p} \to \bbra{k, n}$, satisfying $k=R_0 < \ldots < R_p=n$, and indices $0\leq i_1 \leq \ldots \leq i_{s-p}\leq p$ such that $r_i$ takes the $i$-th value of the ordering of the $(s+1)$-tuple $(R_0, \ldots, R_p, R_{i_1}, \ldots, R_{i_{s-p}})$.
\end{lemma}

In the partitioning equality, $p+1$ represents the number of distinct values taken by $r$ and the indices $i_\alpha$ represents the values taken by $r$ more than one times. For example, let $p = 2$, $s = 5$, $k=3$, $n=6$ and
\[
(r_0, r_1, r_2, r_3, r_4, r_5) = (3, 3, 4, 4, 4, 6) = (R_0, R_0, R_1, R_1, R_1, R_2) = (R_{i_1}, R_0, R_{i_2}, R_{i_3}, R_1, R_2),
\]
where $i_1 = 0$ and $i_2 = i_3 = 1$.

\begin{proof}
    The sets $S_p$ are pairwise disjoint because the number of distinct values taken by $r\in S_p$ is $p+1$. To prove the equality between both sets, we proceed by double inclusion:
    \begin{itemize}
        \item[( $\supset$ )] This inclusion is trivial, as $r$ is increasing.
        \item[( $\subset$ )] Let $E := \{r_0, \dots, r_s\}$ and $p := |E| - 1$. Then $E := \{R_0, \dots, R_p\}$ for some $R_0 < \ldots < R_p$. So there exists $s-p$ indices $i_1 \le\dots\le i_{s-p}$ such that $(r_0, \dots, r_s)$ equals the tuple $(R_0, \ldots, R_p, R_{i_1}, \ldots, R_{i_{s-p}})$ after ordering.
    \end{itemize}
\end{proof}

\begin{proof}[of \Cref{t:Monkam}]
    We can now break up the sum of \Cref{t:matrix} according to the partition of \Cref{l:partition}. Furthermore, denote $A_r := \cond{i}{r_i = r_{i+1}}$ and $B_r := \cond{i}{r_i < r_{i+1}}\cup\{s\}$, which form a partition of $\bbra{0, s}$.
    \[
    \begin{gathered}
        \coeff{n}{k}_{\phi^s} = \sum_{k=r_0\leq \ldots\leq r_s=n} \prod_{i=0}^{s-1} \coeff{r_{i+1}}{r_i}_\phi \\
        = \sum_{p=0}^s \sum_{r\in S_p} \prod_{i\in A_r\sqcup B_r} \coeff{r_{i+1}}{r_i}_\phi = \sum_{p=0}^s \sum_{r\in S_p} \prod_{i\in A_r} \coeff{r_i}{r_i}_\phi \prod_{i\in B_r} \coeff{r_{i+1}}{r_i}_\phi.
    \end{gathered}
    \]
    We rename the variables according to \Cref{l:partition} and use \cref{e:q^n,e:chsp}:
    \begin{align*}
        \coeff{n}{k}_{\phi^s}
        &= \sum_{p=0}^s \sum_{\substack{k=R_0<\ldots<R_p=n \\ 0 \leq i_1 \leq \ldots \leq i_{s-p} \leq p}} \prod_{b=1}^{s-p} \coeff{R_{i_b}}{R_{i_b}}_\phi \prod_{a=0}^{p-1} \coeff{R_{a+1}}{R_a}_\phi \\
        &= \sum_{p=0}^s \sum_{k=R_0<\ldots<R_p=n} \pa{\sum_{0 \leq i_1 \leq \ldots \leq i_{s-p} \leq p} \prod_{a=1}^{s-p} q^{R_{i_a}}} \prod_{b=0}^{p-1} \coeff{R_{b+1}}{R_b}_\phi \\
        &= \sum_{p=0}^s \sum_{k=R_0<\ldots<R_p=n} h_{s-p}(q^{R_0}, \ldots, q^{R_p}) \prod_{b=0}^{p-1} \coeff{R_{b+1}}{R_b}_\phi.
    \end{align*}
    Moreover, if $p > n-k$, given that $R_0, \ldots, R_p$ are distinct integers restricted between $k$ and $n$, the summation given above ranges over an empty set. The upper bound for $p$ may be taken as $n-k$, as $s$ or, more sharply, the minimum of these two values.
\end{proof}

In his work, Monkam \cite{monkam2020} presented an alternative formulation of the homogeneous symmetric polynomial. To obtain his version, we use the second formula of \cref{e:h}. Since $r_0 \lambda_0 = k(s-p - \lambda_1 - \ldots - \lambda_p)$, we have
\begin{align}\label{e:hMonkam}
    &h_{s-p}(q^{r_0}, \ldots, q^{r_p}) = \sum_{\lambda_0 + \ldots + \lambda_p = s-p} q^{r_0\lambda_0 + \ldots + r_p\lambda_p} \nonumber\\
    ={}& \sum_{\lambda_1 = 0}^{s-p} \sum_{\lambda_2 = 0}^{s - p - \lambda_1} \dots \sum_{\lambda_p=0}^{s - p - \lambda_1 - \ldots - \lambda_{p-1}}  q^{k(s-p - \lambda_1 - \ldots - \lambda_p)} q^{r_1 \lambda_1 + \ldots + r_p\lambda_p} \nonumber\\
    ={}& q^{k(s-p)}\sum_{\lambda_1 = 0}^{s-p} q^{(r_1 - k)\lambda_1} \sum_{\lambda_2 = 0}^{s - \lambda_1 - p} q^{(r_2 - k)\lambda_2} \dots \sum_{\lambda_p=0}^{s - \lambda_1 - \ldots - \lambda_{p-1} - p} q^{(r_p - k)\lambda_p}.
\end{align}
\Cref{e:hMonkam} with $k=1$ appears in \cite[Thm.~4.1]{monkam2020}. Monkam had also noticed that the case $q=1$ simplified to Schröder's formula (\Cref{t:Schroder} of \Cref{s:frac1}). This can be proved with the identity \cite{Stanley}
\begin{equation}
    h_{s-p}(\underbrace{1, \ldots, 1}_{p+1 \text{ times}}) = \binom{s}{p}.
\end{equation}

\section{Fractional iteration}\label{s:frac}

By combining a remarkable result from Baker \cite{baker1964} with another, independently found by Écalle \cite{ecalle1973c} and Liverpool \cite{liverpool1975}, we know that for a power series $f$, satisfying $f'(0) = 1$, the set of values of $s$ for which $f^s$ possesses a positive \emph{radius of convergence} is either $\C$ or $a\Z$, for some $a\in\C$. In the former case, we say that $f$ is \emph{embeddable}. However, it is also known that most usual power series are not embeddable \cite{baker1967} and therefore that for most convergent power series, only their discrete iterations converge (the case $a=1$). The function $f(t) = t/(1-t)$ and its iterates $f^s(t) = t/(1-st)$ are examples of embeddable power series, also known for being the only ones meromorphic over the entire complex plane \cite{baker1962}.

In this section, we present three formulas for the coefficients of the fractional iterates of an invertible power series. We begin by revisiting the classical formulas for the unitary case $f'(0)=1$ within our new framework. This simpler setting sharpens our understanding and prepares the way for a more general treatment, where tools from $q$-calculus are introduced and their relevance motivated.

\subsection[Unitary case f'(0)=1]{Unitary case $f'(0)=1$}\label{s:frac1}

The earliest known formula for iterating power series was introduced by Schröder in 1870 \cite{schroeder1871}. Building on an important observation of Cayley \cite{cayley1860}, he obtained a closed-form expression for the coefficients of discrete iterates. This result was later recognized by Bennett \cite{bennett1915} and subsequently by Jabotinsky \cite{jabotinsky1947} as extending naturally to fractional iterates (see also \cite[Sec.~3.7]{comtet1974} and \cite{knuth1992}). In what follows, we further generalize this result to the $k$-th powers of the iterates.

\begin{theorem}[Schröder {\cite[eqs.~(38b), (40)]{schroeder1871}}]\label{t:Schroder}
    If $q = 1$ and $s\in\C$
    \begin{equation}\label{e:Schroder}
        \coeff{n}{k}_{\phi^s} = \sum_{p=0}^{n-k} \binom{s}{p} \coeff{n}{k}_{(\phi-1)^p},
    \end{equation}
    with
    \begin{equation}\label{e:coeff0}
        \coeff{n}{k}_{(\phi-1)^p} = \sum_{k=r_0<\ldots<r_p=n} \prod_{i=0}^{p-1} \coeff{r_{i+1}}{r_i}_\phi.
    \end{equation}
\end{theorem}

\Cref{e:Schroder} was known to Jabotinsky \cite{jabotinsky1947,jabotinsky1963} and to Écalle \cite{ecalle1970,ecalle1971,ecalle1974}, but \cref{e:coeff0} was not. This makes Schröder's original insight all the more remarkable.

\begin{proof}
This expansion follows from the formal power-series expansion of exponentiation and from the linearity of the coefficients (\ref{e:linear}):
\begin{equation}\label{e:finitesum}
    \coeff{n}{k}_{\phi^s} = \coeff{n}{k}_{\sum_{p=0}^\infty \binom{s}{p} (\phi-1)^p} = \sum_{p=0}^\infty \binom{s}{p} \coeff{n}{k}_{(\phi-1)^p}. 
\end{equation}
Although a formal power series need not have a nonzero radius of convergence, its coefficients must remain finite. To verify that this holds in our situation, we show that the summation of \cref{e:finitesum} is in fact finite. Indeed, by \cref{e:compos}
\begin{equation}\label{e:sum}
    \coeff{n}{k}_{(\phi-1)^p} = \sum_{k=r_0\leq\ldots\leq r_p=n} \prod_{i=0}^{p-1} \coeff{r_{i+1}}{r_i}_{\phi-1},
\end{equation}
and by \cref{e:q^n}, $\coeff{n}{n}_{\phi-1} = 0$, thus we can remove zeros from the sum in \cref{e:sum} by tightening the restrictions of the sum from inequalities to strict ones:
\begin{equation}\label{e:simplif}
    \coeff{n}{k}_{(\phi-1)^p} = \sum_{k=r_0<\ldots<r_p=n} \prod_{i=0}^{p-1} \coeff{r_{i+1}}{r_i}_\phi.
\end{equation}
Moreover, by the same argument used in the proof of \Cref{t:Monkam}, \cref{e:simplif} vanishes whenever $p > n-k$. Hence, the series in \cref{e:finitesum} is indeed finite and terminates at $n-k$.
\end{proof}

Years later, Jabotinsky extended the result to arbitrary $k$ and provided a convenient alternative formulation \cite{jabotinsky1963}, expressing fractional iterates in terms of discrete iterates, which can be computed via \Cref{t:matrix}.
\begin{theorem}[Jabotinsky \cite{jabotinsky1963}]\label{t:Jabo}
    If $q = 1$ and $s\in\C$
    \[
    \coeff{n}{k}_{\phi^s} = \sum_{p=0}^{n-k} \coeff{n}{k}_{\phi^p}  \binom{s}{p} \binom{n-k-s}{n-k-p}.
    \]
\end{theorem}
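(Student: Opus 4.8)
The plan is to deduce Jabotinsky's formula directly from Schröder's formula (\Cref{t:Schroder}) by performing a change of basis inside the finite expansions. Write $m \defeq n-k$ for brevity. The starting point is the exact identity $\phi^p = \bigl((\phi-1)+1\bigr)^p = \sum_{j=0}^{p}\binom{p}{j}(\phi-1)^j$, valid since the sum is finite; applying linearity of the coefficient (\cref{e:linear}) gives $\coeff{n}{k}_{\phi^p} = \sum_{j=0}^{p}\binom{p}{j}\coeff{n}{k}_{(\phi-1)^j}$. Substituting this into the right-hand side of the asserted formula and swapping the two finite summations (the ranges are simply $0\le j\le p\le m$) reduces the theorem to the scalar identity
\[
\sum_{p=j}^{m}\binom{s}{p}\binom{m-s}{m-p}\binom{p}{j} = \binom{s}{j}, \qquad 0\le j\le m,
\]
because once this is known the right-hand side collapses to $\sum_{j=0}^{m}\binom{s}{j}\coeff{n}{k}_{(\phi-1)^j}$, which is precisely Schröder's expression for $\coeff{n}{k}_{\phi^s}$.

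To establish the scalar identity I would first use the subset-of-a-subset relation $\binom{s}{p}\binom{p}{j}=\binom{s}{j}\binom{s-j}{p-j}$ to factor $\binom{s}{j}$ out of the sum, then substitute $i=p-j$ to recognise the remaining sum $\sum_{i=0}^{m-j}\binom{s-j}{i}\binom{m-s}{m-j-i}$ as a Vandermonde convolution, equal to $\binom{(s-j)+(m-s)}{m-j}=\binom{m-j}{m-j}=1$. Both auxiliary identities are polynomial identities in the parameters that hold for all nonnegative integer values, hence hold for arbitrary $s\in\C$, which is exactly the generality needed here.

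There is no real obstacle: the content of the argument is entirely the combinatorial identity above, and the rest is the bookkeeping of a double sum and Schröder's formula. The only points meriting explicit mention are that the binomial identities must be invoked in their form valid for complex $s$, and that the Jabotinsky sum genuinely terminates at $p=m$ (which is why $m=n-k$ appears as the upper limit).

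As an alternative, more conceptual, route one can note that $\binom{s}{p}\binom{m-s}{m-p}$ is exactly the Lagrange interpolation basis polynomial for the node $p$ relative to the node set $\{0,1,\ldots,m\}$: Schröder's formula exhibits $s\mapsto\coeff{n}{k}_{\phi^s}$ as a polynomial in $s$ of degree at most $m$, and Jabotinsky's formula is then simply the assertion that this polynomial equals its own Lagrange interpolant through its values at $s=0,1,\ldots,m$, namely the discrete iterates $\coeff{n}{k}_{\phi^p}$ computed by \Cref{t:int}. This viewpoint makes the role of the two binomial factors transparent but still rests, for the identification of the basis polynomial, on the same manipulation of products of binomials.
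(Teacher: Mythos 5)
Your proof is correct and is essentially the paper's own argument run in reverse: the paper expands $(\phi-1)^p$ in powers of $\phi$ and pushes Schr\"oder's formula forward through a sum exchange and a Vandermonde-type identity, whereas you expand $\phi^p$ in powers of $(\phi-1)$ and pull Jabotinsky's right-hand side back to Schr\"oder's expression via the same kind of identity. The reduction to Schr\"oder's formula, the binomial change of basis, the sum swap, and the subset-of-a-subset plus Vandermonde computation (valid for complex $s$ as polynomial identities) all match the paper's route, with your Lagrange-interpolation remark being a nice additional gloss.
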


Jabotinsky established the result by first proving it for integer values of $s$, and then extending it to arbitrary $s$ using an argument based on the equality of polynomials on the integers (see also \cite{knuth1992}). Our approach, by contrast, yields a direct proof.

\begin{proof}
First, we use the binomial expansion and \cref{e:linear} in the equation
\[
\coeff{n}{k}_{(\phi-1)^p} = \coeff{n}{k}_{\sum_{\ell=0}^p \binom{p}{\ell} (-1)^{p-\ell} \phi^\ell} = \sum_{\ell=0}^p \binom{p}{\ell} (-1)^{p-\ell} \coeff{n}{k}_{ \phi^\ell},
\]
which we substitute in \cref{e:Schroder}. Exchanging sums and applying usual binomial identities \cite[Tab.~174]{graham1994} yield the result.
\end{proof}

More specifically, Jabotinsky derived the following formula:
\begin{equation}
    \coeff{n}{k}_{\phi^s} = \sum_{p=0}^{n-k} \coeff{n}{k}_{\phi^p}  \binom{s}{p} \binom{s-1-p}{n-k-p} (-1)^{n-k-p},
\end{equation}
equivalent to the theorem through the identity
\begin{equation}\label{e:binom-}
    \binom{a}{b} (-1)^b = \binom{b-1-a}{b}.
\end{equation}

An alternative expression of interest can be derived from Jabotinsky's. Specifically, by substituting $n$ with $n+k$ and employing different combinatorial identities, it is possible to isolate most of the  $s$-dependence in front:
\begin{equation}\label{e:extract}
    \coeff{n+k}{k}_{\phi^s} = \binom{s}{n} \sum_{p=0}^n \coeff{n+k}{k}_{\phi^p}  \binom{n}{p} \frac{s-n}{s-p} (-1)^{n-p},
\end{equation}
valid for nonnatural integer values of $s$. This version has potential for the asymptotic study of the coefficient as $n$ or $s$ approaches infinity.

\subsection[q-calculus]{$q$-calculus}

The requirement that $f'(0) = q = 1$ is crucial to prove \cref{e:simplif}, and it does not seem that the argument can be altered to cover the scenario where $q \neq 1$. Therefore, an entirely different analysis is necessary. We aim to motivate the application of the \emph{$q$-calculus} for this purpose, as the expression for the first few coefficients seems to point this way. When $s\in\N$, these can be computed via Taylor's formula. By repeatedly applying the chain rule, we find for the first coefficient:
\begin{equation}\label{e:first}
    \odv{f^s(t)}{t}_{t=0} := \rbar{\prod_{i=0}^{s-1} f'(f^i(t))}_{t=0} = q^s.
\end{equation}
To derive the next term, we employ logarithmic differentiation on the product of \cref{e:first} to obtain
\begin{align}
    \odv[order=2]{f^s(t)}{t}_{t=0}
    &= \rbar{\pa{\prod_{i=0}^{s-1} f'(f^i(t))} \pa{\sum_{i=0}^{s-1} \frac{f''(f^i(t))}{f'(f^i(t))} \prod_{j=0}^{i-1} f'(f^j(t))}}_{t=0} \\
    &= q^s \sum_{i=0}^{s-1} \frac{a_2}{q} q^i = [s]_q q^{s-1} a_2, \label{e:second}
\end{align}
where we $[s]_q := 1 + q + \ldots + q^{s-1}$ is a \emph{$q$-integer}. Hence, it does seem to appear that $q$-calculus is involved in the general formulation of 

In the following, we will present basic elements of the theory that we will need in the next section to derive the formula. These elements will be directly taken from Kac's monograph \cite{kac2002} on the subject, starting with the \emph{$q$-derivative}:
\begin{equation}
    D_q f(t) = \frac{f(qt) - f(t)}{qt - t},
\end{equation}
for $q\neq1$. Observe that when $q\to1$, the standard derivative is recovered. This limiting behavior is a typical aspect of $q$-calculus, that is, when allowing $q$ to tend to $1$, we recover the ``classical'' calculus.

The previously defined $q$-integers satisfy a relationship with the $q$-derivative:
\begin{equation}
    D_q t^n = [n]_q t^{n-1},
\end{equation}
which follows from the geometric sum formula. The above equation is generalized by first defining the \emph{$q$-factorial} $[n]_q! := [n]_q [n-1]_q \cdots [1]_q$, for $p \leq n$, to
\begin{equation}
    D_q^p t^n = \frac{[n]_q!}{[n-p]_q!} t^{n-p}.
\end{equation}
Then, the \emph{$q$-binomial coefficients} are defined by
\begin{equation}
    \qbinom{n}{p} := \frac{[n]_q!}{[p]_q! [n-p]_q!},
\end{equation}
and readily extended to complex upper argument as such:
\begin{equation}
    \qbinom{s}{p} := \frac{[s]_q [s-1]_q \cdots [s-p+1]_q}{[p]_q!} = \frac{(q^s-1) (q^{s-1} - 1) \cdots (q^{s-p+1} - 1)}{(q^p-1) (q^{p-1} - 1) \cdots (q - 1)},
\end{equation}
where \emph{$q$-complex} numbers are naturally defined by $[s]_q := \frac{q^s-1}{q-1}$. These coefficients are useful to express the expansion of the $q$-analogue of a binomial, that is \emph{Gauss's formula}:
\begin{equation}\label{e:Gauss}
    (a + b)_q^n := \prod_{i=0}^{n-1} (a + b q^i) = \sum_{\ell=0}^n \qbinom{n}{\ell} q^{\binom\ell{2}} a^{n-\ell} b^\ell.
\end{equation}
Finally, the $q$-analogue of Taylor's formula for an arbitrary constant $c$ is
\begin{equation}\label{e:qtaylor}
    f(t) = \sum_{p=0}^\infty D_q^p f(c) \frac{(t-c)_q^p}{[p]_q!},
\end{equation}
which will only be considered in the formal sense \cite[Sec.~8]{kac2002}.

\subsection{Extension of Monkam's formula}\label{s:monkamgen}

In this section, we generalize Monkam's formula (\Cref{t:Monkam}) to fractional iteration. To this end, we introduce the \emph{divided differences} \cite[Chap.~1]{milne-thomson1933} of a function $g$ defined by the base case $g[x_i] := g(x_i)$ and the recurrence relation
\begin{equation}
    g[x_i, \dots, x_j] := \frac{g[x_{i+1}, \dots, x_j] - g[x_i, \dots, x_{j-1    }]}{x_j - x_i}.
\end{equation}
They are given precisely by the equation \cite[\S 1.31]{milne-thomson1933}
\begin{equation}\label{e:fdformula}
    g[x_0, \dots, x_p] = \sum_{i=0}^p \frac{g(x_i)}{\prod_{j\neq i} (x_i - x_j)}.
\end{equation}

In \Cref{t:Monkam}, we used the complete homogeneous symmetric polynomials, which are related to divided differences by the following equation:
\begin{equation}\label{e:symfd}
    h_{s-p}(x_0, \ldots, x_p) = [x_0, \dots, x_p]^s,
\end{equation}
where $[x_0, \dots, x_p]^s$ represents $g[x_0, \dots, x_p]$ when $g(x) = x^s$. \Cref{e:symfd} can be proved via \emph{Sylvester's identity} \cite{nica2023}
\begin{equation}
    h_{s-p}(x_0, \ldots, x_p) = \sum_{i=0}^p \frac{x_i^s}{\prod_{j\neq i} (x_i - x_j)}
\end{equation}
and \cref{e:fdformula}. When $s$ is not an integer, the symmetric polynomial cannot be used, since it is defined only for integer indices. This motivates the use of divided differences instead, which naturally leads to the following generalization.

\begin{theorem}\label{t:monkamgen}
    If $s\in\C$
    \begin{equation}\label{e:gen2}
        \coeff{n}{k}_{\phi^s} = \sum_{p=0}^{n-k}  \sum_{k=r_0<\ldots<r_p=n} [q^{r_0}, \dots, q^{r_p}]^s \prod_{i=0}^{p-1} \coeff{r_{i+1}}{r_i}_\phi,
    \end{equation}
\end{theorem}

At $p=1$, we have $r_0 = k$ and $r_1 = n$ therefore
\begin{equation}\label{e:p=1'}
   [q^{r_0}, q^{r_p}]^s = \frac{q^{k s} - q^{n s}}{q^k - q^n} = q^{k(s-1)} [s]_{q^{n-k}},
\end{equation}
where $[s]_{q^{n-k}}$ is a $q^{n-k}$-integer. At $p=n-k$, the conditions of the summation forces $r_i = k + i$ for $i\in\bbra{0, p}$, thus we have only one term and the divided differences are given by
\begin{equation}\label{e:p=n-k}
    [q^k, q^{k+1} \dots, q^n]^s = \qbinom{s}{n-k} q^{k(s-n+k)}.
\end{equation}
\Cref{e:p=n-k} is also sufficient to show that setting $q=1$ in \cref{e:gen2} leads to Schröder's formula.

The linearity of the divided differences enables a simple proof for \Cref{t:monkamgen}.
\begin{proof}[of \Cref{t:monkamgen}]
    We begin by expanding everything via the linearity of the coefficients (\ref{e:linear}), $q$-Taylor's formula (\ref{e:qtaylor}), Gauss's binomial formula (\ref{e:Gauss}) and Monkam's formula (\ref{e:Monkam}):
    \[
    \begin{gathered}
        \coeff{n}{k}_{\phi^s} =  \sum_{j=0}^\infty \qbinom{s}{j} \coeff{n}{k}_{(\phi - 1)_q^j} = \sum_{j=0}^\infty \qbinom{s}{j} \sum_{\ell=0}^j q^{\binom{j-\ell}{2} + k (s - \ell)}  (-1)^{j-\ell} \qbinom{j}{\ell} \coeff{n}{k}_{\phi^\ell} \\
        = \sum_{j=0}^\infty \qbinom{s}{j} \sum_{\ell=0}^j q^{\binom{j-\ell}{2} + k (s - \ell)}  (-1)^{j-\ell} \qbinom{j}{\ell} \sum_{p=0}^{n-k} \sum_{k=r_0<\ldots<r_p=n} h_{s-p}(q^{r_0}, \dots, q^{r_p}) \prod_{i=0}^{p-1} \coeff{r_{i+1}}{r_i}_\phi.
    \end{gathered}
    \]
    Now, we apply \cref{e:symfd}, exchange the order of the summations and apply the same identities back to collapse everything down to the desired formula:
    \[
    \coeff{n}{k}_{\phi^s} = \sum_{p=0}^{n-k} \sum_{k=r_0<\ldots<r_p=n} [q^{r_0}, \dots, q^{r_p}]^s \prod_{i=0}^{p-1} \coeff{r_{i+1}}{r_i}_\phi
    \]
\end{proof}

\subsection{Extension of Schröder's formula}\label{s:schrodergen}

Using the $q$-Taylor formula \eqref{e:qtaylor}, we can now extend Schröder's formula (\Cref{t:Schroder}) to the nonunitary case

\begin{theorem}\label{t:gen}
    If $s\in\C$
    \begin{equation}\label{e:gen}
        \coeff{n}{k}_{\phi^s} = \sum_{p=0}^{n-k} \qbinom{s}{p} q^{k(s-p)} \coeff{n}{k}_{(\phi-q^k)_q^p},
    \end{equation}
    with
    \begin{equation}\label{e:coeff1}
        \coeff{n}{k}_{(\phi-q^k)_q^p} = \sum_{k=r_0 \leq \ldots \leq r_p = n} \prod_{i=0}^{p-1} \pa{\coeff{r_{i+1}}{r_i}_{\phi} - q^{i+k} \delta_{r_{i+1}-r_i}}.
    \end{equation}
\end{theorem}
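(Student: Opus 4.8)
The plan is to recognise \cref{e:gen} as the $q$-Taylor expansion (\cref{e:qtaylor}) of the power $t\mapsto t^{s}$ taken about the centre $c=q^{k}$ and transported to $t=\phi$. I would work under the standing assumption that $q$ is not a root of unity — exactly what is needed for the $q$-binomials $\qbinom{s}{p}$ ($p\le n-k$) to be defined and for the diagonal entries $1,q,\dots,q^{n}$ of $\phi$ acting on $\C_{n}[x]$ to be pairwise distinct — recovering the remaining $q$, in particular $q=1$ (which gives back \Cref{t:Schroder}), by continuity. The scalar input is the following: since $D_{q}(t^{s})=[s]_{q}t^{s-1}$, we have $D_{q}^{p}(t^{s})=[s]_{q}[s-1]_{q}\cdots[s-p+1]_{q}\,t^{s-p}$, which at $t=q^{k}$ equals $[p]_{q}!\,\qbinom{s}{p}q^{k(s-p)}$; substituting into \cref{e:qtaylor} gives the formal identity $t^{s}=\sum_{p\ge 0}\qbinom{s}{p}q^{k(s-p)}(t,-q^{k})_{q}^{p}$. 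Evaluated at $t=q^{m}$ with $k\le m\le n$ this becomes a \emph{finite} identity, since the $p$-th summand carries the factor $(q^{m},-q^{k})_{q}^{p}=\prod_{i=0}^{p-1}(q^{m}-q^{k+i})$, which vanishes once $p>m-k$; hence
\[
q^{ms}=\sum_{p=0}^{m-k}\qbinom{s}{p}q^{k(s-p)}(q^{m},-q^{k})_{q}^{p}\qquad(k\le m\le n).
\]

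Next I would transfer this to $\phi$ by a degree-truncation observation. On $V\defeq\C_{n}[x]$ the operator $\phi$ preserves every subspace $\C_{j}[x]$ and satisfies $\coeff{m}{m}_{\phi}=q^{m}$ by \cref{e:q^n}, so $\phi-q^{m}$ maps $\C_{m}[x]$ into $\C_{m-1}[x]$; composing, $(\phi-q^{k})(\phi-q^{k+1})\cdots(\phi-q^{n})$ maps $V$ into $\C_{k-1}[x]$. Therefore, if a polynomial $g$ vanishes at $q^{k},\dots,q^{n}$ then $g(\phi)$ maps $V$ into $\C_{k-1}[x]$, whence $\coeff{n}{k}_{g(\phi)}=0$; in other words the linear functional $g\mapsto\coeff{n}{k}_{g(\phi)}$ depends only on the values $g(q^{k}),\dots,g(q^{n})$, so there exist constants $c_{k},\dots,c_{n}$ with $\coeff{n}{k}_{g(\phi)}=\sum_{m=k}^{n}c_{m}\,g(q^{m})$ for all polynomials $g$. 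Taking $g(t)=t^{s}$ with $s\in\N$ gives $\coeff{n}{k}_{\phi^{s}}=\sum_{m=k}^{n}c_{m}q^{ms}$; taking $g(t)=(t,-q^{k})_{q}^{p}$ gives $\coeff{n}{k}_{(\phi,-q^{k})_{q}^{p}}=\sum_{m=k}^{n}c_{m}(q^{m},-q^{k})_{q}^{p}$, which incidentally vanishes for $p>n-k$ and explains the upper limit in \cref{e:gen}.

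Combining, for $s\in\N$,
\begin{align*}
\sum_{p=0}^{n-k}\qbinom{s}{p}q^{k(s-p)}\coeff{n}{k}_{(\phi,-q^{k})_{q}^{p}}
&=\sum_{m=k}^{n}c_{m}\sum_{p=0}^{n-k}\qbinom{s}{p}q^{k(s-p)}(q^{m},-q^{k})_{q}^{p}\\
&=\sum_{m=k}^{n}c_{m}\,q^{ms}=\coeff{n}{k}_{\phi^{s}},
\end{align*}
the middle equality using the finite scalar identity above (the dropped terms $m-k<p\le n-k$ are zero). This establishes \cref{e:gen} for $s\in\N$; to pass to $s\in\C$ I would observe that both sides are polynomials of degree $\le n$ in the single indeterminate $q^{s}$ — the right-hand side manifestly, and $\coeff{n}{k}_{\phi^{s}}$ because the formal Koenigs linearisation $f^{s}=\kappa^{-1}\circ(q^{s}\,\mathrm{id})\circ\kappa$ (available as $q$ is not a root of unity) exhibits every coefficient of $f^{s}(x)^{k}$ as such a polynomial — so, agreeing at the infinitely many values $q^{s}$, $s\in\N$, they coincide for all $s$. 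The companion formula \cref{e:coeff1} is then immediate: expand Gauss's product $(\phi,-q^{k})_{q}^{p}=\prod_{i=0}^{p-1}(\phi-q^{k+i})$, apply \cref{e:compos} with $U_{i}=\phi-q^{k+i}$, and simplify each factor by \cref{e:linear} together with $\coeff{n}{k}_{1}=\delta_{n-k}$.

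The step I expect to be the real obstacle is the transition from the scalar $q$-Taylor identity to the operator identity: one cannot literally ``plug in $\phi$'', because the series fails to terminate at operator level — $(\phi,-q^{k})_{q}^{p}$ is not eventually $0$ on $\C_{n}[x]$ once $k\ge 1$ — so one must route the argument through the functional $g\mapsto\coeff{n}{k}_{g(\phi)}$, and it is the degree-truncation lemma (resting on $\coeff{m}{m}_{\phi}=q^{m}$) that supplies the genuine finiteness there. By comparison, the polynomiality-in-$q^{s}$ argument used to descend from $s\in\N$ to complex $s$ is routine.
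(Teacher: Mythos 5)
Your argument is correct, but it reaches the theorem by a genuinely different route from the paper's. The paper applies the formal $q$-Taylor expansion of $x^s$ about $c=q^k$ directly at the operator level, so it obtains the infinite sum $\sum_{p\ge0}\qbinom{s}{p}q^{k(s-p)}\coeff{n}{k}_{(\phi,-q^k)_q^p}$ for arbitrary $s\in\C$ in one stroke, and then proves finiteness combinatorially: after expanding $\coeff{n}{k}_{(\phi,-q^k)_q^p}$ via \cref{e:compos}, it shows that for $p>n-k$ every increasing index sequence $k=j_0\le\ldots\le j_p=n$ admits a ``largest fixed point'' $\alpha$ with $j_\alpha=j_{\alpha+1}=\alpha+k$, which annihilates the corresponding factor by \cref{e:q^n}. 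You instead prove a spectral interpolation lemma --- since $\phi-q^m$ lowers degree on $\C_m[x]$, the functional $g\mapsto\coeff{n}{k}_{g(\phi)}$ factors through evaluation at $q^k,\ldots,q^n$ --- which reduces everything to scalar $q$-Taylor identities and yields the vanishing for $p>n-k$ essentially for free, at the cost of two extra steps: verifying the identity first for $s\in\N$ and then extending by polynomiality in $q^s$. What your approach buys is a cleaner, more conceptual finiteness argument and an explicit accounting of where ``$q$ not a root of unity'' enters (the paper is silent on this, even though its $\qbinom{s}{p}$ are equally undefined at low-order roots of unity); what it costs is the detour through integer $s$, the appeal to the Koenigs linearisation to certify that $\coeff{n}{k}_{\phi^s}$ is a polynomial of degree at most $n$ in $q^s$ --- an input external to the paper, and one that implicitly commits to a particular definition of $\phi^s$ for non-integer $s$, whereas the paper in effect lets the formal expansion itself play that role --- and the limiting argument needed to recover $q=1$ and hence \Cref{t:Schroder}. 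Your derivation of \cref{e:coeff1} coincides with the paper's.
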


Note that, for $q = 1$, the same argument used for \cref{e:simplif} can be applied to recover \cref{e:coeff0} from \cref{e:coeff1}.

\begin{proof}
    In proving Schröder's formula, we relied on the binomial expansion of the exponential, which in that specific setting allowed the coefficient to be written as a finite sum. To obtain an analogous finite sum in the general case, the expansion must instead be performed using the $q$-Taylor formula (\ref{e:qtaylor}):
    \begin{equation}
        \phi^s = \sum_{p=0}^\infty \qbinom{s}{p} c^{s-p} (\phi-c)_q^p.
    \end{equation}
    When $q=1$, it is natural to take $c = 1$. In the general case, however, any choice of $c = c(q)$ that satisfies $c(1) = 1$ would suffice. The naive choice $c(q) = 1$ still produces a finite sum, its upper limit becomes $n$ rather than the sharper bound $n-k$ obtained in \Cref{t:Schroder}. To recover the optimal upper bound $n-k$, the appropriate choice is $c(q) = q^k$. Therefore, by \cref{e:linear}
    \begin{equation}\label{e:infinitesum}
        \coeff{n}{k}_{\phi^s} = \sum_{p=0}^\infty \qbinom{s}{p} q^{k(s-p)} \coeff{n}{k}_{(\phi-q^k)_q^p}.
    \end{equation}
    Now, to show that the sum is finite (and ranging up to $n-k$), we first expand the coefficient in \cref{e:infinitesum}. By \cref{e:compos}
    \begin{align}
        \coeff{n}{k}_{(\phi-q^k)_q^p}
        &= \coeff{n}{k}_{\prod_{i=0}^{p-1} (\phi - q^{i+k})} = \sum_{k=r_0 \leq \ldots \leq r_p = n} \prod_{i=0}^{p-1} \coeff{r_{i+1}}{r_i}_{\phi - q^{i+k}} \nonumber\\
        &= \sum_{k=r_0 \leq \ldots \leq r_p = n} \prod_{i=0}^{p-1} \pa{\coeff{r_{i+1}}{r_i}_{\phi} - q^{i+k} \delta_{r_{i+1}-r_i}}. \label{a:prod}
    \end{align}
    Now we show that the expression above vanishes whenever $p>n-k$. In this situation, the integers $R_i := r_{i-k}$, for $i\in\bbra{k, n+1}$, where the $r_i$ are the summation indices, satisfy the inequalities
    \begin{equation}\label{e:ineq}
        k= R_k \leq \ldots \leq R_{n+1} \leq n.
    \end{equation}
    $R$ admits at least one fixed point $k$. Let $a\in\bbra{k, n}$ denotes the largest fixed point. Then, by definition of $a$, the interval $\bbra{a+1, n+1}$ contains no fixed points of $R$.

    Assume, for contradiction, that $R_{a+1} \geq a+1$. Since $a+1$ is not a fixed point, we must have $R_{a+1} > a+1$. As $R$ is increasing, it follows that $R_{a+2} \geq R_{a+1} \geq a+2$. Iterating this argument yields $R_{n+1} \geq n+1$, contradicting \cref{e:ineq}. Hence, $a = R_a \leq R_{a+1} < a+1$ which forces $R_a = R_{a+1} = a$. Translating this to $r$, set $\alpha = a - k \in \bbra{0, n-k}$. Then, $r_\alpha = r_{\alpha+1} = \alpha + k$. The $\alpha$-th factor in the product of \cref{a:prod} equals
   \[
   \coeff{r_{\alpha+1}}{r_\alpha}_\phi - q^{\alpha + k} \delta_{r_{\alpha+1}-r_\alpha} = \coeff{\alpha + k}{\alpha + k} - q^{\alpha + k} = 0,
   \]
   by \cref{e:q^n}. Therefore, the product itself is also $0$. This shows that the summands of \cref{e:infinitesum} vanishes at the indices $p > n-k$.
\end{proof}

\subsection{Extension of Jabotinsky's formula}\label{s:jabogen}

We apply the same approach to Jabotinsky's formula (\Cref{t:Jabo}), extending it to the nonunitary case.
\begin{theorem}[Tambs Lyche \cite{tambslyche1927,lavoie1981}]\label{t:tambs}
    If $s\in\C$
    \begin{align*}
        \coeff{n}{k}_{\phi^s}
        &= \sum_{p=0}^{n-k} \coeff{n}{k}_{\phi^p} \qbinom{s}{p} \qbinom{n-k-s}{n-k-p} q^{(n-p)(s-p)}.
    \end{align*}
\end{theorem}

\begin{proof}
    Similarly to the proof of \Cref{t:Jabo}, we expand the following using \cref{e:Schroder,e:Gauss}:
    \[
    \coeff{n}{k}_{(\phi-q^k)_q^p} =  \coeff{n}{k}_{\sum_{\ell=0}^p \qbinom{p}{\ell} q^{\binom{\ell}{2}} \coeff{n}{k}_{\phi^{p-\ell}} \phi^{p-\ell} (-q^k)^\ell} = \sum_{\ell=0}^p \coeff{n}{k}_{\phi^{p-\ell}} \qbinom{p}{\ell} q^{\binom{\ell}{2} + k \ell}  (-1)^\ell.
    \]
    We substitute the above equation into the formula for the coefficient and swap sums:
    \begin{align*}
        \coeff{n}{k}_{\phi^s}
        &= \sum_{p=0}^{n-k} \qbinom{s}{p} \sum_{\ell=0}^p \coeff{n}{k}_{\phi^\ell} \qbinom{p}{\ell} q^{\binom{p-\ell}{2} + k (s - \ell)}  (-1)^{p-\ell} \\
        &= \sum_{\ell=0}^{n-k} \coeff{n}{k}_{\phi^\ell} q^{k(s - \ell)} \sum_{p=\ell}^{n-k} \qbinom{s}{p} \qbinom{p}{\ell} q^{\binom{p-\ell}{2}}  (-1)^{p-\ell} \\
        &= \sum_{\ell=0}^{n-k} \coeff{n}{k}_{\phi^\ell} q^{k(s - \ell)} \sum_{p=0}^{n-k} \qbinom{s}{\ell} \qbinom{s-\ell}{p-\ell} q^{\binom{p-\ell}{2}}  (-1)^{p-\ell} \\
        &= \sum_{\ell=0}^{n-k} \coeff{n}{k}_{\phi^\ell} \qbinom{s}{\ell} q^{k(s - \ell)} \sum_{p=0}^{n-k-\ell} \qbinom{s-\ell}{p} q^{\binom{p}{2}}  (-1)^p.
    \end{align*}
    The theorem is proved upon application of the identity  \cite[Id.~3.9 with $q=1$ and $m=i=0$]{shattuck2011}
    \begin{equation}
        \sum_{p=0}^{b} \qbinom{a}{p} q^{\binom{p}{2}}  (-1)^p = \qbinom{b-a}{b} q^{ba}.
    \end{equation}
\end{proof}
The case $k=1$ was identified by Lavoie and Tremblay \cite{lavoie1981}, who noted that it is a direct reformulation of a formula discovered much earlier by Tambs Lyche in 1927 \cite{tambslyche1927}. We can now extend this result to arbitrary natural values of $k$:
\begin{equation}\label{e:lavoie1981}
    \coeff{n}{k}_{\phi^s} = \sum_{p=0}^{n-k} \coeff{n}{k}_{\phi^p} \qbinom{s}{p} \qbinom{s-p-1}{n-k-p} q^{k(s-p) + \binom{n-k-p+1}{2}} (-1)^{n-k-p}.
\end{equation}
Indeed, it follows from the identity
\begin{equation}
    \qbinom{a}{b} (-1)^b =  \qbinom{b-a-1}{b} q^{ba-\binom{b}{2}},
\end{equation}
analog to \cref{e:binom-}.

Tambs Lyche proved this result for positive integers $s$ although, like Jabotinsky, he argued that his formula was valid for all $s$, only to address the case $s=-1$, corresponding to inversion.

As we did in \cref{e:extract}, we can factor out most of the dependence in $s$ to arrive at the equivalent statement
\begin{equation}\label{e:qextract}
    \coeff{n+k}{k}_{\phi^s} = \qbinom{s}{n} \sum_{p=0}^n \coeff{n+k}{k}_{\phi^p}  \qbinom{n}{p} \frac{[s]_q-[n]_q}{[s]_q-[p]_q} q^{k(s-p) + \binom{n-p}{2}} (-1)^{n-p},
\end{equation}
which could be further simplified with the equality $\frac{[a]_q-[b]_q}{[c]_q-[d]_q} = \frac{q^a - q^b}{q^c - q^d}$.

\subsection{Computation of the first few terms}

In this section, we compute the first few terms of the general formula, for $k=1$ and $n=1, 2, 3, 4$ from \Cref{t:monkamgen}. The first coefficient is
\begin{equation}\label{e:n=1}
    \coeff{1}{1}_{\phi^s} = \sum_{1=r_0<\ldots<r_0=1} [q^{r_0}]^s \prod_{i=0}^{-1} \coeff{r_{i+1}}{r_i}_\phi = q^s,
\end{equation}
in accordance with \cref{e:first}. For the second coefficient, at $p=0$, we have $1 = r_0 = 2$, so the sum is empty. At $p=1$, we have $r_0 = 1$ and $r_1 = 2$ so, using \cref{e:p=1'}
\begin{equation}\label{e:n=2}
    \coeff{2}{1}_{\phi^s} = \sum_{1=r_0 < r_1=2} [q^{r_0}, q^{r_1}]^s \coeff{r_1}{r_0}_\phi = q^{s-1} [s]_q a_2
\end{equation}
in accordance with \cref{e:second}. At $n=3$, the summands $p=0$ and $p=1$ have a similar treatment. For $p=2$, according to \cref{e:p=n-k}
\[
\begin{gathered}
    \sum_{1=r_0<r_1<r_2=3} [q^{r_0}, q^{r_1}, q^{r_2}]^s \prod_{i=0}^1 \coeff{r_{i+1}}{r_i}_\phi \\
    = [q, q^2, q^3]^s \coeff{2}{1}_\phi \coeff{3}{2}_\phi = \qbinom{s}{2} q^{s-2} a_2 (3a_1a_2) = 3\qbinom{s}{2} q^{s-1} a_2^2,
\end{gathered}
\]
hence
\begin{equation}\label{e:n=3}
    \coeff{3}{1}_{\phi^s} = q^{s-1} [s]_{q^2} a_3 + 3\qbinom{s}{2} q^{s-1} a_2^2.
\end{equation}

For $n=4$, at $p=1$ we have $q^{s-1} [s]_{q^3} a_4$ and at $p=2$ we get two terms:
\begin{align*}
    &\sum_{1=r_0<r_1<r_2=4} [q^{r_0}, q^{r_1}, q^{r_2}]^s \prod_{i=0}^1 \coeff{r_{i+1}}{r_i}_\phi \\
    ={}& [q, q^2, q^4]^s \coeff{2}{1}_\phi \coeff{4}{2}_\phi + [q, q^3, q^4]^s \coeff{3}{1}_\phi \coeff{4}{3}_\phi \\
    ={}& \frac{[q^2, q^4]^s - [q, q^2]^s}{q^4 - q} a_2 (3a_2^2 + 4a_1 a_3) + \frac{[q^3, q^4]^s - [q, q^3]^s}{q^4 - q} a_3 (6a_1^2 a_2) \\
    ={}& \frac{\frac{q^{4s} - q^{2s}}{q^4 - q^2} - \frac{q^{2s} - q^s}{q^2 - q}}{q^4 - q} a_2 (3a_2^2 + 4q a_3) + \frac{\frac{q^{4s} - q^{3s}}{q^4 - q^3} - \frac{q^{3s} - q^s}{q^3 - q}}{q^4 - q} q^2 6 a_3 a_2
\end{align*}

For $p=3$ according to \cref{e:p=n-k}
\[
\begin{gathered}
    \sum_{1=r_0<r_1<r_2<r_3=4} [q^{r_0}, q^{r_1}, q^{r_2}, q^{r_3}]^s \prod_{i=0}^2 \coeff{r_{i+1}}{r_i}_\phi \\
    = [q, q^2, q^3, q^4]^s a_2 (3a_1 a_2) (6a_1^2 a_2) = 18\qbinom{s}{3} q^s a_2^3
\end{gathered}
\]
Summing each term simplifies to
\begin{equation}\label{e:n=4}
    \begin{aligned}
        \coeff{4}{1}_{\phi^s}
        &= 3 q^{s-2} \frac{(q^s-1)(q^{s-1}-1)(q^{s+1}+5q^s - 5 q^2 - 1)}{(q-1)(q^2-1)(q^3-1)} a_2^3 \\
        &+ 2 q^{s-1} \frac{(q^s-1)(q^{s-1}-1)(3q^{s+1} + 5 q^s + 5q + 2)}{(q^2 - 1)(q^3 - 1)} a_2 a_3 \\
        &+ q^{s-1} [s]_{q^3} a_4.
    \end{aligned}
\end{equation}

\Cref{e:n=1,e:n=2,e:n=3,e:n=4} match the computations of the first few terms done by Korkine in 1882 \cite[p.~240]{korkine1882}. He used the ordinary coefficient convention, expressing $\inv{n!}\coeff{n}{1}_{\phi^s}$ in terms of the sequence $q_n = a_n/n!$ up to $n=5$.

\section{The iterative logarithm}\label{s:itlog}

A natural object of study in the iteration theory is the \emph{iterative logarithm} \cite{ecalle1974,jabotinsky1963}. First introduced by Frege \cite{frege1874,gronau1997}, it is defined by the equation
\begin{equation}
    \itlog(f) := \odv{f^s}{s}_{s=0},
\end{equation}
and is an analog of the natural logarithm, but where the composition operation plays the role of the product. For instance, $\itlog(f^s) = s \itlog(f)$. The convergence of the expansion of $\itlog(f)$ was shown to be equivalent to the embeddability of $f$ \cite{erdos1960}. The first terms of its (typically divergent) expansion were derived by Korkine \cite[p.~241]{korkine1882}, and the case $f'(0) = q=1$ was later fully determined by Jabotinsky \cite{jabotinsky1963}. We now present the expansion in complete generality, in forms similar to Schröder's formula (\Cref{t:gen}) and Tambs Lyche's formula (\Cref{t:tambs}).

\begin{theorem}\label{t:logschroder}
    \begin{equation}
        \itlog(f)(t) = t \log q + \frac{\log q}{q-1} \sum_{n=2}^\infty \frac{t^n}{n!} \sum_{p=1}^{n-1} \frac{(-1)^{p-1}}{[p]_q}  q^{-\binom{p+1}{2}}\coeff{n}{1}_{(\phi-q)_q^p},
    \end{equation}
    where the coefficient term is given by \cref{e:coeff1}. 
\end{theorem}

\begin{theorem}\label{t:logtambs}
    \begin{equation}
        \itlog(f)(t) = t \log q + \frac{\log q}{q-1} \sum_{n=2}^\infty \frac{t^n}{n!} \sum_{p=1}^{n-1} \frac{(-1)^{p-1}}{[p]_q} \qbinom{n-1}{p} q^{\binom{p+1}{2}-p n} \coeff{n}{1}_{\phi^p},
    \end{equation}
    where the coefficient term is given by \Cref{t:matrix}.
\end{theorem}

It is worth noting that the index of the first nonzero coefficient (the \emph{order}) is $1$ whenever $q\neq 1$, whereas it is at least $2$ when $q = 1$.

\begin{proof}[of \Cref{t:logschroder,t:logtambs}]
    To differentiate the fractional iterates, we first need an expression for the derivative of the $q$-binomial coefficient. For this purpose, we apply logarithmic differentiation:
    \[
    \odv*{\log \qbinom{s}{p}}{s}  = \sum_{\ell=0}^{p-1} \odv*{\log [s-\ell]_q = \sum_{\ell=0}^{p-1}}{s} \frac{q^{s-\ell}}{[s-\ell]_q} \frac{\log q}{q-1},
    \]
    and deduce that
    \[
    \odv*{\qbinom{s}{p}}{s} = \qbinom{s}{p} \frac{\log q}{q-1}\sum_{\ell=0}^{p-1} \frac{q^{s-\ell}}{[s-\ell]_q} = \frac{\log q}{q-1}\pa{\frac{[s-1]_q \cdots [s-p+1]_q}{[p]_q!} q^s + \qbinom{s}{p} \sum_{\ell=1}^{p-1} \frac{q^{s-\ell}}{[s-\ell]_q}}.
    \]
    Evaluating the latter at $s=0$ yields
    
    \[
    \begin{gathered}
        \odv*{\qbinom{s}{p}}{s}_{s=0}  = \frac{\log q}{q-1} \frac{[-1]_q \cdots [-p+1]_q}{[p]_q!} \\
        = \frac{\log q}{q-1} \frac{[1]_q \cdots [p-1]_q}{[p]_q!} \prod_{k=1}^{p-1} (-q^{-k}) = \frac{\log q}{q-1} \frac{(-1)^{p-1}}{[p]_q} q^{-\binom{p}{2}}.
    \end{gathered}
    \]
    By the Leibniz product rule
    \begin{equation}\label{e:Leibniz}
         \odv*{\qbinom{s}{p}}{s} q^s = \pa{\odv*{\qbinom{s}{p}}{s}} q^s + \qbinom{s}{p} q^s \log q.
    \end{equation}
    At $s=0$, when $p > 0$ the right term of the addition in \cref{e:Leibniz} becomes zero, while for $p = 0$, it is the left term that vanishes. Therefore, according to \Cref{t:gen}
    \begin{align*}
        \odv*{\coeff{n}{1}_{\phi^s}}{s}_{s=0}
        &= (\log q) \coeff{n}{1}_1 + \sum_{p=1}^{n-1} \frac{\log q}{q-1} \frac{(-1)^{p-1}}{[p]_q} q^{-\binom{p}{2}} q^{-p} \coeff{n}{1}_{(\phi-q)_q^p} \\
        &= (\log q) \delta_{n-1} + \frac{\log q}{q-1} \sum_{p=1}^{n-1}\frac{(-1)^{p-1}}{[p]_q} q^{-\binom{p+1}{2}} \coeff{n}{1}_{(\phi-q)_q^p},
    \end{align*}
    hence the first formula holds. The second formula is derived in a manner analog to the proof of \Cref{t:tambs}:
    \begin{align*}
        \sum_{p=1}^{n-1}\frac{(-1)^{p-1}}{[p]_q} q^{-\binom{p+1}{2}} \coeff{n}{1}_{(\phi-q)_q^p}
        &= \sum_{p=1}^{n-1}\frac{(-1)^{p-1}}{[p]_q} q^{-\binom{p+1}{2}} \sum_{\ell=0}^p \qbinom{p}{\ell} q^{\binom{p-\ell}{2}} (-q)^{p-\ell} \coeff{n}{1}_{\phi^\ell} \\
        &= \sum_{p=1}^{n-1} \sum_{\ell=1}^p \frac{(-1)^{-\ell-1}}{[p]_q} \qbinom{p}{\ell} q^{\binom{p-\ell+1}{2} - \binom{p+1}{2}} \coeff{n}{1}_{\phi^\ell} \\
        &= \sum_{1\leq\ell\leq p<n} \frac{(-1)^{\ell-1}}{[\ell]_q} \qbinom{p-1}{\ell-1} q^{-\sum_{k=p-\ell+1}^p k} \coeff{n}{1}_{\phi^\ell} \\
        &= \sum_{\ell=1}^{n-1} \frac{(-1)^{\ell-1}}{[\ell]_q} \coeff{n}{1}_{\phi^\ell} \sum_{p=\ell}^{n-1} \qbinom{p-1}{\ell-1} q^{\binom{\ell}{2}-\ell p}.
    \end{align*}
    For the last step, we apply a $q$-\emph{hockey-stick identity}
    \begin{equation}
        \sum_{p=\ell}^{n-1} \qbinom{p-1}{\ell-1} q^{-\ell p} = \qbinom{n-1}{\ell} q^{-\ell (n-1)},
    \end{equation}
    which can be found in an equivalent form in \cite[Id.~3.1 with $p=1$]{shattuck2011}.
\end{proof}

In the limit $q \to 1$, we recover the classic formulas \cite{jabotinsky1963}

\begin{align}
    \itlog(f)(t)
    &= \sum_{n=2}^\infty \frac{t^n}{n!} \sum_{p=1}^{n-1} \frac{(-1)^{p-1}}{p} \coeff{n}{1}_{(\phi-1)^p} \label{e:itlog1}\\
    &= \sum_{n=2}^\infty \frac{t^n}{n!} \sum_{p=1}^{n-1} \frac{(-1)^{p-1}}{p} \binom{n-1}{p} \coeff{n}{1}_{\phi^p}.
\end{align}

\begin{example}
    The function $f(t) = e^t-1$ is, according to Baker \cite{baker1958,baker1964,baker1967}, not embeddable. The umbral operator $T$ associated to $f^{-1}(D)$ maps the monomials to the \emph{Touchard polynomials} \cite[Sec.~6.4]{beauduin2024} whose coefficients are the \emph{Stirling numbers of the second kind} $\Stir{n}{k} = \coeff{n}{k}_T$.
    
    \emph{Koszul numbers} $K_n$ are the name (proposed in \cite{manetti2016}) for the coefficient of the (divergent) expansion the iterative logarithm of $f$:
    \[
    \itlog(f)(t) = \sum_{n=2}^\infty \frac{K_n}{n!} t^n.
    \]
    The formula for $K_n$ is given by \cref{e:itlog1}. Using \cref{e:coeff0} and the identity $\Stir{n}{1} = 1$ for $n > 0$, it simplifies to
    \[
    K_n = \sum_{p=1}^{n-1} \frac{(-1)^{p-1}}{p} \sum_{1 < r_1 < \ldots < r_p = n} \prod_{i=1}^{p-1} \Stir{r_{i+1}}{r_i}.
    \]
    This formula was first shown by Aschenbrenner \cite{aschenbrenner2012}, in the process of proving a conjecture.
\end{example}

Perhaps the most interesting expansion is that in the form of Monkam's formula (\Cref{t:monkamgen}).
\begin{theorem}
    \begin{equation}
        \itlog(f)(t) = \sum_{n=1}^\infty \frac{t^n}{n!} \sum_{p=0}^{n-1}  \sum_{1=r_0<\ldots<r_p=n} \log[q^{r_0}, \dots, q^{r_p}] \prod_{i=0}^{p-1} \coeff{r_{i+1}}{r_i}_\phi.
    \end{equation}
\end{theorem}

\begin{proof}
    According to \cref{e:fdformula},
    \[
    \odv*{[x_0, \dots, x_p]^s}{s}_{s=0} = \sum_{i=0}^p \frac{\log(x_i)}{\prod_{j\neq i} (x_i - x_j)} = \log[x_0, \dots, x_p],
    \]
    which combined with \Cref{t:monkamgen} yields the desired result.
\end{proof}

\printbibliography

\end{document}